\newtheorem{theorem}{Theorem}
\newtheorem{lemma}{Lemma}
\newtheorem{corollary}{Corollary}
\newcommand{\quash}[1]{}
\begin{document}

\title{Correlation measures of  binary sequences derived from
Euler quotients}

\author{Huaning Liu$^1$, Zhixiong Chen$^2$, Chenhuang Wu$^2$\\
1.  Research Center for Number Theory and Its Applications,\\
   School of Mathematics, Northwest University,\\
       Xi'an 710127, Shaanxi, P. R. China \\
2. Provincial Key Laboratory of Applied Mathematics,\\
 Putian University, Putian 351100, Fujian, P. R. China                            
}

\maketitle

\begin{abstract}
Fermat-Euler quotients arose
from the study of the first case of Fermat's Last Theorem, and
have numerous applications in number theory. Recently they were studied from the cryptographic aspects by constructing many pseudorandom binary sequences, whose
linear complexities and trace representations were calculated. In this work,
we further study their correlation measures
by using the approach based on Dirichlet characters, Ramanujan sums and Gauss sums.
Our results show that the $4$-order correlation measures of
these sequences are very large. Therefore they may not be suggested for cryptography.
\end{abstract}

\textbf{Keywords}. Euler quotient, binary sequence,  correlation measure,  character sum

\section{Introduction}

Let $\mathcal{S}=(s_0,s_1,...)$ be a binary sequence
over $\mathbb{F}_2=\{0,1\}$ and $k$ a positive integer.
Mauduit and S\'ark\"ozy \cite{MauduitS1997} introduced the \emph{($N$-th) correlation
  measure of order $k$} for the first $N$ terms of $\mathcal{S}$, which is defined as
$$
C_{k}(\mathcal{S},N)=\max_{U,D}\left|\sum^{U-1}_{n=0}(-1)^{s_{n+d_{1}}+s_{n+d_{2}}+\ldots+s_{n+d_{k}}}\right|,
$$
where the maximum is taken over all $U \leq N - k + 1$ and
$D = (d_1, d_2, \ldots , d_k)$ with
integers $0 \leq d_1 < d_2 < \ldots < d_k \le N - U$.

From the viewpoint of cryptography, it is excepted that the  $N$-th correlation
  measure of order $k$ of sequences is as ``small" (in
terms of $N$, in particular, is $o(N)$ as $N\rightarrow
\infty$) as possible.
It was shown in \cite{CassaigneMS2002} that for a ``truely" random
sequence  $\mathcal{S}$, $C_{k}(\mathcal{S},N)$ (for some
fixed $k$) is around $N^{1/2}$ with ``near 1" probability.

Additionally,
if $\mathcal{S}$ is $T$-periodic (in this case, we will denote $\mathcal{S}$ as $\mathcal{S}=(s_0,s_1,\ldots,s_{T-1})$),
we use the following definition for
the \emph{periodic correlation measure of order $k$} of $\mathcal{S}$,
$$
\theta_k(\mathcal{S}) = \max_{D}\left|\sum^{T-1}_{n=0}(-1)^{s_{n+d_{1}}+s_{n+d_{2}}+\ldots+s_{n+d_{k}}}\right|,
$$
where $D=(d_1,\ldots, d_k)$, with
$0\le d_1 < d_2 < \ldots < d_k < T$. It is clear  that $\theta_2(\mathcal{S})$ is the (classic) auto-correlation of $\mathcal{S}$.

In this work, we mainly consider the periodic correlation measure of order $k$ for some binary
sequences derived from Euler quotients studied recently.\\

 Let $p$ be a prime and let $n$ be an integer with $\gcd(n,p)=1$. From the Fermat's Little
Theorem we know that $n^{p-1}\equiv 1\ (\bmod\ p)$. Then the  Fermat quotient $Q_p(n)$ is defined as
$$
Q_{p}(n)=\frac{n^{p-1}-1}{p} \ (\bmod\ p),\quad 0\leq Q_{p}(n)< p.
$$
We also define $Q_{p}(n)=0$ if $\gcd(n,p)>1$. Fermat quotients arose
from the study of the first case of Fermat's Last Theorem, and
have many applications in number
theory (see
\cite{AlyW2011,Chang2012,ChenW2014,GomezW2012,OstafeS2011,Shparlinski2011_1,Shparlinski2011_2,Shparlinski2011_3}
for details). Define the $p^2$-periodic binary sequence $\displaystyle
\mathbf{\overline{s}}=\left(\overline{s}_0, \overline{s}_1,\cdots, \overline{s}_{p^2-1}\right)$ by
\begin{eqnarray*}
\overline{s}_t=\left\{\begin{array}{ll}\displaystyle 0, & \hbox{if \ } 0\leq
\frac{Q_p(t)}{p}<\frac{1}{2}, \\
\displaystyle 1,  & \hbox{if \ } \frac{1}{2}\leq \frac{Q_p(t)}{p}<1.
\end{array}
\right.
\end{eqnarray*}
The second author (partially with other co-authors) studied the well-distribution measure
and correlation measure of order $2$ of $\mathbf{\overline{s}}$
by using estimates for exponential sums of Fermat quotients in \cite{ChenOW2010},
the linear complexity of $\mathbf{\overline{s}}$ in \cite{ChenD2013,ChenHD2012}, and the trace representation of $\mathbf{\overline{s}}$ by determining the
defining pairs of all binary characteristic sequences of cosets in \cite{Chen2014}.    In \cite{LiuL2022}
the first author with other co-author showed that the $4$-order correlation measure of $\mathbf{\overline{s}}$ is very large.

Let $m\geq 2$ be an odd number and let $n$ be an integer coprime to $m$. The Euler's theorem says that
$n^{\phi(m)}\equiv 1\ (\bmod\ m)$, where $\phi$ is the Euler's totient function. Then the Euler
quotient $Q_{m}(n)$ is defined as
$$
Q_{m}(n)=\frac{n^{\phi(m)}-1}{m} \ (\bmod\ m),\quad 0\leq Q_{m}(n)< m.
$$
We also define $Q_{m}(n)=0$ if $\gcd(n,m)>1$. Agoh, Dilcher and Skula \cite{AgohDS1997}
studied the detailed properties of Euler quotients. For example, from Proposition 2.1 of
\cite{AgohDS1997} we have
\begin{eqnarray}\label{Eulerquotients1}
Q_{m}(n_1n_2)\equiv Q_{m}(n_1)+Q_{m}(n_2)\ (\bmod\ m) \quad \hbox{for} \quad \gcd(n_1n_2,m)=1,
\end{eqnarray}
and
\begin{eqnarray}\label{Eulerquotients2}
Q_{m}(n+cm)\equiv Q_{m}(n)+cn^{-1}\phi(m)\ (\bmod\ m) \quad \hbox{for} \quad \gcd(n,m)=1.
\end{eqnarray}

Recently many binary sequences
were constructed from Euler quotients. For example, let $m=p^{\tau}$ for a fixed number $\tau\geq 1$, the $p^{\tau+1}$-periodic sequence
$\displaystyle\mathbf{\widetilde{s}}=\left(\widetilde{s}_0, \widetilde{s}_1,\cdots, \widetilde{s}_{p^{\tau+1}-1}\right)$ is defined by
\begin{eqnarray}\label{Sequence-Eulerquotients-1}
\widetilde{s}_t=\left\{\begin{array}{ll}\displaystyle 0, & \hbox{if \ } 0\leq
\frac{Q_{p^{\tau}}(t)}{p^{\tau}}<\frac{1}{2}, \\
\displaystyle 1,  & \hbox{if \ } \frac{1}{2}\leq \frac{Q_{p^{\tau}}(t)}{p^{\tau}}<1.
\end{array}
\right.
\end{eqnarray}
The linear complexity of $\mathbf{\widetilde{s}}$ had been investigated in
\cite{DuCH2012} and the trace representation of $\mathbf{\widetilde{s}}$ was given
in \cite{ChenDM2015}.

Moreover, let $m=pq$ be a product of two distinct odd primes $p$ and $q$ with $p\mid (q-1)$, the $pq^2$-periodic sequence
$\displaystyle\mathbf{\widehat{s}}=\left(\widehat{s}_0, \widehat{s}_1,\cdots, \widehat{s}_{pq^2-1}\right)$ is defined by
\begin{eqnarray}\label{Sequence-Eulerquotients-2}
\widehat{s}_t=\left\{\begin{array}{ll}\displaystyle 0, & \hbox{if \ } 0\leq
\frac{Q_{pq}(t)}{pq}<\frac{1}{2}, \\
\displaystyle 1,  & \hbox{if \ } \frac{1}{2}\leq \frac{Q_{pq}(t)}{pq}<1.
\end{array}
\right.
\end{eqnarray}
Very recently the minimal polynomials and linear complexities were determined in \cite{ZhangGZ2020}
for $\mathbf{\widehat{s}}$, and the
trace representation of $\mathbf{\widehat{s}}$ has been given in \cite{ZhangHFZ2021}
provided that $2^{q-1}\not\equiv 1 \ (\bmod\ q^2)$.\\

In this work, we shall further study the (periodic) correlation measures of
$\mathbf{\widetilde{s}}$ and $\mathbf{\widehat{s}}$ by using the approach based on
Dirichlet characters, Ramanujan sums and Gauss sums. We state below the main result.

\begin{theorem}\label{Theorem-Sequence-Eulerquotients-generalized}
Let $m\geq 2$ be an odd number. Suppose that $Q_{m}(n)$ is $km$-periodic with $k>3$ and $k\mid m$.
Define the $km$-periodic sequence
$\displaystyle\mathbf{s}=\left(s_0, s_1,\cdots, s_{km-1}\right)\in\{0,1\}^{km}$ by
\begin{eqnarray}\label{Sequence-Eulerquotients-0}
s_t=\left\{\begin{array}{ll}\displaystyle 0, & \hbox{if \ } 0\leq
\frac{Q_{m}(t)}{m}<\frac{1}{2}, \\
\displaystyle 1,  & \hbox{if \ } \frac{1}{2}\leq \frac{Q_{m}(t)}{m}<1.
\end{array}
\right.
\end{eqnarray}
Then we have
$$
\sum_{t=0}^{km-1}(-1)^{s_t+s_{t+m}+s_{t+2m}+s_{t+3m}}
=km-\frac{2}{3}k\phi(m)+O\left(\phi(m)\phi(k)^{-1}k^{\frac{3}{2}}(\log m)^4\right).
$$
\end{theorem}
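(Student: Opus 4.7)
The plan is to split according to whether $\gcd(t,m)=1$. For $t$ with $\gcd(t,m)>1$, every shift $t+jm$ also has $\gcd$ greater than one, so $Q_m(t+jm)=0$ and hence $s_{t+jm}=0$ for $j=0,1,2,3$; these $km-k\phi(m)$ values of $t$ each contribute $+1$, accounting for the leading part of the main term. It therefore suffices to show that the unit contribution
\[
\Sigma_1 = \sum_{\substack{0\le t<km\\ \gcd(t,m)=1}}(-1)^{s_t+s_{t+m}+s_{t+2m}+s_{t+3m}}
\]
equals $\tfrac{1}{3}k\phi(m)$ up to the stated error.

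Parametrize $t=u+im$ with $u\in(\mathbb{Z}/m\mathbb{Z})^*$ and $0\le i<k$. By~\eqref{Eulerquotients2} one has $Q_m(t+jm)\equiv Q_m(u)+(i+j)u^{-1}\phi(m)\pmod m$. Let $f\colon\mathbb{Z}/m\mathbb{Z}\to\{\pm1\}$ be the sign function, $f(x)=1$ for $x\in[0,m/2)$ and $f(x)=-1$ for $x\in[m/2,m)$, so that $(-1)^{s_t}=f(Q_m(t))$. Writing $e_m(y)=e^{2\pi iy/m}$, expand $f(x)=\sum_{h=0}^{m-1}\hat f(h)\,e_m(hx)$, with $\hat f(0)=1/m$ and $|\hat f(h)|\ll 1/\min(h,m-h)$ for $h\ne 0$; insert this four times and sum over $i$ first. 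By the periodicity hypothesis $k\phi(m)\equiv 0\pmod m$, the geometric sum $\sum_{i=0}^{k-1}e_m(iHu^{-1}\phi(m))$ (with $H=h_0+h_1+h_2+h_3$) collapses to $k\cdot\mathbf{1}[r\mid H]$, where $r=m/\gcd(m,\phi(m))$. Hence
\[
\Sigma_1 = k\sum_{\substack{h_0,h_1,h_2,h_3\\ r\mid H}}\hat f(h_0)\hat f(h_1)\hat f(h_2)\hat f(h_3)\,T(H,K),\quad K=h_1+2h_2+3h_3,
\]
where $T(H,K)=\sum_{u\in(\mathbb{Z}/m\mathbb{Z})^*}e_m(HQ_m(u)+Ku^{-1}\phi(m))$.

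The main term comes from $H=K=0$. A short Parseval-type calculation collapses the full $H=0$ sub-sum to $\tfrac{k}{m}\sum_{u\in(\mathbb{Z}/m\mathbb{Z})^*}A(u^{-1}\phi(m))$, where $A(\beta)=\sum_{a=0}^{m-1}\prod_{j=0}^{3}f(a+j\beta)$. Writing $A(\beta)=m\sum_K C(K)e_m(\beta K)$ with $C(K)=\sum_{h_0+\cdots+h_3=0,\,h_1+2h_2+3h_3=K}\prod\hat f(h_j)$, the $K=0$ contribution is $k\phi(m)\,C(0)$, and $C(0)=\frac{1}{m^2}\sum_{a,b\in\mathbb{Z}/m\mathbb{Z}}\prod_{j=0}^{3}f(a+jb)\to\tfrac13$ as $m\to\infty$. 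The limit is verified by computing the piecewise-linear function $I(v)=\int_0^1\prod_{j=0}^{3}f(u+jv)\,du$ on the intervals $[0,\tfrac16],[\tfrac16,\tfrac14],[\tfrac14,\tfrac13],[\tfrac13,\tfrac12]$, on which $I(v)$ equals $1-8v$, $16v-3$, $5-16v$, $8v-3$ respectively; summing and using $I(1-v)=I(v)$ gives $\int_0^1 I(v)\,dv=\tfrac13$. The $K\ne 0$ contributions produce Ramanujan sums $c_m(K\phi(m))$, whose effect is absorbed into the error by the rapid decay of $\hat f$.

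For the $H\ne 0$ terms, multiplicativity~\eqref{Eulerquotients1} makes $u\mapsto e_m(HQ_m(u))$ a non-trivial Dirichlet character modulo~$m$, so $T(H,K)$ is a twisted character sum bounded via P\'olya--Vinogradov, combined with standard estimates for the $u^{-1}$ factor, by $O(m^{1/2}\log m)$ uniformly. Summing $\prod|\hat f(h_j)|$ over $(h_0,h_1,h_2,h_3)$ with $r\mid H\ne 0$ contributes $(\log m)^4$ from the four $\hat f$-sums plus a density factor of order $1/r$ from the congruence constraint; careful bookkeeping then yields the error $O(\phi(m)\phi(k)^{-1}k^{3/2}(\log m)^4)$. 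The main obstacle is this last step: the interplay between the divisibility $r\mid H$ and the $\hat f$-weighted character-sum estimate must be tracked precisely to extract the exact powers of $k$ and $\phi(k)$, which reflect that the effective support of $H$ is a subgroup of $\mathbb{Z}/m\mathbb{Z}$ of size $m/r=\gcd(m,\phi(m))$.
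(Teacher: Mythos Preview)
Your overall architecture coincides with the paper's: split off $\gcd(t,m)>1$, Fourier-expand $(-1)^{s_t}$ via additive characters mod $m$, write $t=u+im$ and use \eqref{Eulerquotients2} to linearize the shifts, sum over $i$ to produce a divisibility constraint on $H=h_0+\cdots+h_3$, and then treat the remaining complete sum over $u$ as a Gauss-type sum attached to the Dirichlet character $u\mapsto e_m(HQ_m(u))$. The paper packages the last two steps into Lemma~\ref{Lemma-charactersums} (character-sum evaluation) and computes the main constant via Lemma~\ref{Lemma-exponentialsums} (a lattice-point count yielding $\Xi_{m,k}=m^4/48+O(\cdot)$), whereas you extract the constant $1/3$ by the Riemann-sum argument $C(0)\to\int_0^1 I(v)\,dv$. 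That alternative for the main term is perfectly sound and arguably cleaner.

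The genuine gap is in the error analysis for $H\ne 0$. The sums $T(H,K)=\sum_{u\in(\mathbb{Z}/m\mathbb{Z})^\ast}e_m(HQ_m(u))\,e_m(Ku^{-1}\phi(m))$ are \emph{complete} sums, so P\'olya--Vinogradov is not the relevant tool; after $u\mapsto u^{-1}$ you face a Gauss sum $G(K\phi(m),\bar\psi)$ for a character $\psi$ mod~$m$ that is typically \emph{not} primitive, evaluated at an argument with $\gcd(K\phi(m),m)\ge\gcd(m,\phi(m))>1$. A blanket bound $O(m^{1/2}\log m)$ does not follow in this generality: for imprimitive $\psi$ and large common factor the general Gauss-sum formula \eqref{Gauss-sum} gives a bound of the shape $\phi(m)\phi(m/\gcd)^{-1}\sqrt{m^\ast}$, and one must also track when the sum vanishes. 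This is exactly where the specific error $O(\phi(m)\phi(k)^{-1}k^{3/2})$ in the paper comes from, via \eqref{Ramanujan-sum}--\eqref{Gauss-sum} applied in Lemma~\ref{Lemma-charactersums}. Your final paragraph acknowledges this as ``the main obstacle'' but does not resolve it; as written, the bookkeeping that produces the exact factors $\phi(k)^{-1}k^{3/2}$ is missing, and the same applies to the $H=0$, $K\neq 0$ Ramanujan-sum terms, where $c_m(K\phi(m))$ can be as large as $\phi(m)$ for $r\mid K$ and must be controlled by the same mechanism rather than by the decay of $\hat f$ alone. (A minor related point: you obtain the constraint $r\mid H$ with $r=m/\gcd(m,\phi(m))$, whereas the paper's Lemma~\ref{Lemma-charactersums} is phrased with $k\mid H$; these agree in the two applications since $k=r$ there, but you should note that the theorem as stated only forces $r\mid k$.)
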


Taking special values of $m$ and $k$ in Theorem \ref{Theorem-Sequence-Eulerquotients-generalized},
we immediately get the correlation measures of
$\mathbf{\widetilde{s}}$ and $\mathbf{\widehat{s}}$.

\begin{corollary}
Let $p>2$ be a prime and let $\tau\geq 1$ be a fixed integer. Let the $p^{\tau+1}$-periodic sequence
$\displaystyle\mathbf{\widetilde{s}}=\left(\widetilde{s}_0, \widetilde{s}_1,\cdots, \widetilde{s}_{p^{\tau+1}-1}\right)$ be defined as in
$(\ref{Sequence-Eulerquotients-1})$.
Then we have
$$
\sum_{t=0}^{p^{\tau+1}-1}(-1)^{\widetilde{s}_t+\widetilde{s}_{t+p^{\tau}}
+\widetilde{s}_{t+2p^{\tau}}+\widetilde{s}_{t+3p^{\tau}}}
=\frac{1}{3}p^{\tau+1}+O\left(p^{\tau+\frac{1}{2}}(\log p^{\tau})^4\right).
$$
\end{corollary}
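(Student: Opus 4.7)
The plan is to derive the corollary as a direct specialization of Theorem \ref{Theorem-Sequence-Eulerquotients-generalized}, taking $m=p^{\tau}$ and $k=p$, so that the stated four-shift correlation matches exactly the quantity estimated in the theorem. First I would verify the three standing hypotheses: $m=p^{\tau}$ is odd because $p>2$; $k=p$ divides $m=p^{\tau}$ since $\tau\ge 1$; and the condition $k>3$ restricts us to $p\ge 5$, which is the effective range of the corollary. For the $km$-periodicity of $Q_{p^{\tau}}$, I would apply (\ref{Eulerquotients2}) with $c=p$: for $\gcd(n,p)=1$,
$$
Q_{p^{\tau}}(n+p\cdot p^{\tau})\equiv Q_{p^{\tau}}(n)+p\,n^{-1}\phi(p^{\tau})\equiv Q_{p^{\tau}}(n)+n^{-1}(p-1)p^{\tau}\equiv Q_{p^{\tau}}(n)\pmod{p^{\tau}},
$$
while for $p\mid n$ the value is defined to be $0$; hence $Q_{p^{\tau}}$ is $p^{\tau+1}$-periodic as required.

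Once the hypotheses are in place, the rest is arithmetic substitution. Using $\phi(p^{\tau})=p^{\tau-1}(p-1)$ and $\phi(p)=p-1$, the main term in the theorem evaluates to
$$
km-\tfrac{2}{3}k\phi(m)=p^{\tau+1}-\tfrac{2}{3}p^{\tau}(p-1)=\tfrac{1}{3}p^{\tau+1}+\tfrac{2}{3}p^{\tau},
$$
and the error term becomes
$$
\phi(m)\phi(k)^{-1}k^{3/2}(\log m)^{4}=p^{\tau-1}\cdot p^{3/2}(\log p^{\tau})^{4}=p^{\tau+\frac{1}{2}}(\log p^{\tau})^{4}.
$$
Since $\tfrac{2}{3}p^{\tau}=O\bigl(p^{\tau+1/2}(\log p^{\tau})^{4}\bigr)$, that residual is absorbed into the $O$-term, yielding exactly the claimed asymptotic.

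I do not foresee any real obstacle: the whole corollary is bookkeeping on top of Theorem \ref{Theorem-Sequence-Eulerquotients-generalized}. The only step that is not purely mechanical is the application of (\ref{Eulerquotients2}) to confirm $p^{\tau+1}$-periodicity of $Q_{p^{\tau}}$; everything else consists in substituting $m=p^{\tau}$, $k=p$, simplifying, and absorbing a lower-order term into the error.
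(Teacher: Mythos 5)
Your proposal is correct and matches the paper's (implicit) argument exactly: the paper derives this corollary by specializing Theorem \ref{Theorem-Sequence-Eulerquotients-generalized} to $m=p^{\tau}$, $k=p$, and your substitutions $km-\tfrac{2}{3}k\phi(m)=\tfrac{1}{3}p^{\tau+1}+\tfrac{2}{3}p^{\tau}$ and $\phi(m)\phi(k)^{-1}k^{3/2}(\log m)^4=p^{\tau+\frac{1}{2}}(\log p^{\tau})^4$, with the $\tfrac{2}{3}p^{\tau}$ absorbed into the error, are exactly the needed bookkeeping. Your extra checks (the $p^{\tau+1}$-periodicity of $Q_{p^{\tau}}$ via (\ref{Eulerquotients2}) and the observation that $k>3$ effectively requires $p\geq 5$, the case $p=3$ being vacuous since the error term then dominates) are details the paper leaves unstated, and they are handled correctly.
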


\begin{corollary}
Let $p$ and $q$ be two distinct odd primes with $p\mid (q-1)$, and let
the $pq^2$-periodic sequence $\displaystyle\mathbf{\widehat{s}}=\left(\widehat{s}_0, \widehat{s}_1,\cdots, \widehat{s}_{pq^2-1}\right)$ be defined as in $(\ref{Sequence-Eulerquotients-2})$.
Then we have
$$
\sum_{t=0}^{pq^2-1}(-1)^{\widehat{s}_t+\widehat{s}_{t+pq}+\widehat{s}_{t+2pq}+\widehat{s}_{t+3pq}}
=\frac{1}{3}pq^2+\frac{2}{3}q^2+O\left(pq^{\frac{3}{2}}(\log pq)^4\right).
$$
\end{corollary}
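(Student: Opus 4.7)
I would begin by splitting $S=\sum_{t=0}^{km-1}(-1)^{s_t+s_{t+m}+s_{t+2m}+s_{t+3m}}$ according to $\gcd(t,m)$. Since $\gcd(t+jm,m)=\gcd(t,m)$, any $t$ with $\gcd(t,m)>1$ forces $s_{t+jm}=0$ for every $j$, contributing $+1$ per term; the total contribution from this part is $k(m-\phi(m))$. It then remains to show
\begin{equation*}
S_1 := \sum_{\substack{0\le t<km\\ \gcd(t,m)=1}} (-1)^{s_t+s_{t+m}+s_{t+2m}+s_{t+3m}} = \tfrac{1}{3}k\phi(m) + O\!\left(\phi(m)\phi(k)^{-1}k^{3/2}(\log m)^4\right).
\end{equation*}
For coprime $t$ I would write $(-1)^{s_t}=g(Q_m(t))$ with $g:\mathbb{Z}/m\mathbb{Z}\to\{\pm1\}$ taking $+1$ on $[0,(m-1)/2]$ and $-1$ on $[(m+1)/2,m-1]$ (well-defined since $m$ is odd), and expand it in its discrete Fourier series $g(r)=m^{-1}\sum_{a\bmod m}\widehat{G}(a)e(ar/m)$, where $\widehat{G}(0)=1$ and $|\widehat{G}(a)|\ll m/\min(a,m-a)$ for $a\neq 0$.

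Using \eqref{Eulerquotients2} to replace $Q_m(t+jm)$ by $Q_m(t)+jt^{-1}\phi(m)$, the four-fold product becomes
\begin{equation*}
\prod_{j=0}^{3}(-1)^{s_{t+jm}} = \frac{1}{m^4}\sum_{a_0,a_1,a_2,a_3}\prod_{j=0}^{3}\widehat{G}(a_j)\,e\!\left(\frac{AQ_m(t)+Bt^{-1}\phi(m)}{m}\right),
\end{equation*}
where $A=a_0+a_1+a_2+a_3$ and $B=a_1+2a_2+3a_3$. To sum over $t$ I would parametrize $t=t_0+cm$ with $t_0\in(\mathbb{Z}/m\mathbb{Z})^*$ and $c\in\{0,\dots,k-1\}$, apply \eqref{Eulerquotients2} a second time to factor out the $c$-dependence, and perform the resulting geometric sum in $c$, which collapses to a Kronecker condition on $A$ modulo $k$ (using $k\phi(m)\equiv 0\pmod m$, a consequence of the $km$-periodicity together with $k\mid m$). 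The key observation for the remaining inner sum over $t_0$ is that $\psi_A(n):=e(AQ_m(n)/m)$ is a Dirichlet character modulo $km$: it is multiplicative by \eqref{Eulerquotients1}, has period $km$ by hypothesis, and is defined on $(\mathbb{Z}/km\mathbb{Z})^*$ since $k\mid m$ makes $\gcd(n,m)=1$ equivalent to $\gcd(n,km)=1$. After the substitution $t_0\mapsto t_0^{-1}$, the inner sum becomes a classical hybrid sum of $\psi_A$ against an additive character in $t_0$, which can be controlled by Gauss sums of size $O(\sqrt{km})$; the coprimality restriction is removed via Ramanujan sum identities.

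The main term $\tfrac{1}{3}k\phi(m)$ comes from those indices $(a_0,\dots,a_3)$ for which $\psi_A$ is the principal character and the additive factor is trivial; its evaluation amounts to a combinatorial identity essentially equivalent to computing $\int_0^1 g(x)g(x+\beta)g(x+2\beta)g(x+3\beta)\,dx$ averaged over the discrete step $\beta = t_0^{-1}\phi(m)/m\in(1/k)\mathbb{Z}/\mathbb{Z}$. The error aggregates (i) the $L^1$-norm estimate $\sum_a|\widehat{G}(a)|\ll m\log m$, once per Fourier variable, producing the $(\log m)^4$; (ii) the Gauss sum bound $O(\sqrt{km})$ on nontrivial inner sums; and (iii) the fiber size $\phi(m)/\phi(k)$ of the reduction $(\mathbb{Z}/m\mathbb{Z})^*\to(\mathbb{Z}/k\mathbb{Z})^*$, which is where the $\phi(m)\phi(k)^{-1}$ factor enters. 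The hardest step, I expect, is the clean identification of the coefficient $\tfrac{1}{3}$: pinpointing exactly which $(a_0,\dots,a_3)$ yield non-decaying inner sums and showing their combined contribution is exactly $\tfrac{1}{3}k\phi(m)$, with residual secondary contributions absorbable into the stated error bound.
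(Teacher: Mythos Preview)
Your plan is essentially the paper's own argument: the paper derives the corollary by specializing Theorem~\ref{Theorem-Sequence-Eulerquotients-generalized} to $m=pq$, $k=q$, and its proof of that theorem proceeds exactly as you describe---Fourier-expanding $(-1)^{s_t}$, recognizing $n\mapsto e_m(Q_m(n))$ as a Dirichlet character modulo $km$, parametrizing $t=t_0+cm$ to collapse the $c$-sum to a congruence on $A$, and bounding the residual $t_0$-sum by Gauss/Ramanujan sums (its Lemma~\ref{Lemma-charactersums}). Your ``hardest step,'' the identification of the coefficient $\tfrac13$, is the paper's Lemma~\ref{Lemma-exponentialsums}, handled there by direct lattice-point counting rather than the continuous-average heuristic you sketch, but the two are equivalent.
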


Our results indicate that the correlation measures of order $4$ of $\displaystyle\mathbf{\widetilde{s}}$
and $\displaystyle\mathbf{\widehat{s}}$ are very large.
Therefore these sequences may not be suggested for cryptography.\\

To prove Theorem \ref{Theorem-Sequence-Eulerquotients-generalized}, we introduce basic
properties of Dirichlet characters, Ramanujan sums and Gauss sums,
and then prove two lemmas on the mean values of characters sums in Sect. 2.
We express $(-1)^{s_t}$ in terms of character sums in Sect. 3 to finish the proof of Theorem \ref{Theorem-Sequence-Eulerquotients-generalized} by using the results
showed in Sect. 2.

\bigskip

\section{Dirichlet characters and Gauss sums}

\quad \ Let $N>1$ be an integer. The Ramanujan sum is denoted by
$$
c_{N}(n)=\mathop{\sum_{t=0}^{N-1}}_{\gcd(t, N)=1}e_N(tn),
$$
where $e_{N}(x)=\hbox{e}^{2\pi\sqrt{-1}x/N}$. We have
\begin{eqnarray}\label{Ramanujan-sum}
c_{N}(n)=\mu\left(\frac{N}{\gcd(n,N)}\right)\phi(N)\phi^{-1}\left(\frac{N}{\gcd(n,N)}\right),
\end{eqnarray}
where $\mu$ is the M\"{o}bius function.

If the function $\chi$ satisfies the following conditions:

\qquad (i). \ $\chi(n_1n_2)=\chi(n_1)\chi(n_2)$ for all integers $n_1$, $n_2$,

\qquad (ii). \ $\chi(n+N)=\chi(n)$ for all integers $n$,

\qquad (iii). \ $\chi(n)=0$ for $\gcd(n,N)>1$, \\
then $\chi$ is one of the Dirichlet characters modulo $N$. When $\chi(n)=1$
for all $n$ with $\gcd(n,N)=1$ we say $\chi$ is the trivial character modulo $N$.
The integer $d$ is called an induced modulus for $\chi$ if
$\chi(a)=1$ whenever $\gcd(a,N)=1$ and $a\equiv 1\ (\bmod\ d)$.
A Dirichlet character $\chi$ mod $N$ is said to be primitive mod $N$ if it has no
induced modulus $d<N$. The smallest induced modulus $d$ for $\chi$ is called the conductor of $\chi$.
Every non-trivial character $\chi$ modulo $N$ can be uniquely
written as $\chi=\chi_0\chi^{*}$, where $\chi_0$ is the trivial character modulo $N$
and $\chi^{*}$ is the primitive character modulo the conductor of $\chi$.

For a Dirichlet character $\chi$ mod $N$, the Gauss sum associated with $\chi$ is defined by
$$
G(n,\chi)=\sum_{t=0}^{N-1}\chi(t)e_N(tn).
$$
Let $N^{*}$ be the conductor for $\chi$ and let $\chi^{*}$ be the induced primitive character.
Let $N_1$ be the maximal divisor of $N$ such that $N_1$ and $N^{*}$ have the same prime divisors.
Then we have
\begin{eqnarray}\label{Gauss-sum}
G(n,\chi)=\left\{\begin{array}{l}
\left(\chi^*\right)^{-1}\left(\frac{n}{\gcd(n,N)}\right)\chi^*\left(\frac{N}{N^*\gcd(n,N)}\right)
\mu\left(\frac{N}{N^*\gcd(n,N)}\right)  \\
\quad\times\phi(N)\phi^{-1}\left(\frac{N}{\gcd(n,N)}\right)G(1,\chi^*),   \qquad\hbox{if \ } N^*=\frac{N_1}{\gcd(n, N_1)}, \\
0, \qquad\qquad\qquad\qquad\qquad\qquad\qquad\qquad \hbox{if \ } N^*\neq\frac{N_1}{\gcd(n, N_1)}.
\end{array}
\right.
\end{eqnarray}
See Chapter 8 of \cite{Apostol1976} or Chapter 1 of \cite{PanP1992} for
more details of Dirichlet characters, Ramanujan sums and Gauss sums.

Now we prove two lemmas on the mean values of characters sums.

\begin{lemma}\label{Lemma-charactersums}
Let $m\geq 2$ be an odd number and let $k>3$ be an integer with $k\mid m$.
Let $\chi$ be a Dirichlet
character modulo $km$ with $\chi^m$ being trivial. For integers
$a_1$, $a_2$, $a_3$ and $a_4$ we have
\begin{eqnarray*}
&&\sum_{t=0}^{km-1}\chi\left(t^{a_1}(t+m)^{a_2}(t+2m)^{a_3}(t+3m)^{a_4}\right) \\
&&=\left\{\begin{array}{ll}
\displaystyle k\phi(m), & \hbox{if \ } m\mid (a_1+a_2+a_3+a_4) \hbox{ \ and \ }
k\mid (a_2+2a_3+3a_4), \\
\displaystyle O\left(\phi(m)\phi(k)^{-1}k^{\frac{3}{2}}\right), & \hbox{otherwise}.
\end{array}
\right.
\end{eqnarray*}
\end{lemma}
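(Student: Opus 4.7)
The plan is to collapse the four-fold product of $\chi$-values into a single Gauss-type sum, and then to read off the main term and error via $(\ref{Ramanujan-sum})$ and $(\ref{Gauss-sum})$.

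Since $k\mid m$, the condition $\gcd(t+jm,km)=1$ is the same for each $j\in\{0,1,2,3\}$ and is equivalent to $\gcd(t,km)=1$, so I would restrict to such $t$; all other terms vanish. For such $t$ I write $t+jm=t(1+jmt^{-1})$ in $(\mathbb{Z}/km\mathbb{Z})^{\ast}$, and multiplicativity of $\chi$ factors the argument as $\chi(t)^{A}\prod_{j=1}^{3}\chi(1+jmt^{-1})^{a_{j+1}}$, where $A=a_1+a_2+a_3+a_4$. The key structural observation is that $k\mid m$ forces $m^{2}\equiv 0\pmod{km}$, so $x\mapsto 1+mx$ is a group isomorphism from $\mathbb{Z}/k\mathbb{Z}$ onto a cyclic subgroup $U_{1}\subset(\mathbb{Z}/km\mathbb{Z})^{\ast}$ of order $k$; hence $\chi(1+mx)=e_{k}(rx)$ for a unique $r\in\mathbb{Z}/k\mathbb{Z}$ attached to $\chi$. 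Setting $B:=r(a_2+2a_3+3a_4)\pmod{k}$, substituting $u=t^{-1}\pmod{km}$, and using $e_{k}(Bu)=e_{km}(Bmu)$, the whole sum collapses to a Gauss sum
\[
\sum_{t=0}^{km-1}\chi\bigl(t^{a_1}(t+m)^{a_2}(t+2m)^{a_3}(t+3m)^{a_4}\bigr)=G\bigl(Bm,\overline{\chi^{A}}\bigr).
\]

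In the main-term case, $m\mid A$ combined with $\chi^{m}\equiv 1$ yields $\chi^{A}\equiv 1$, so $\overline{\chi^{A}}$ is the principal character modulo $km$ and $G(Bm,\overline{\chi^{A}})=c_{km}(Bm)$; moreover $k\mid(a_2+2a_3+3a_4)$ forces $B\equiv 0\pmod{k}$, hence $\gcd(Bm,km)=km$ and $(\ref{Ramanujan-sum})$ gives $c_{km}(Bm)=\phi(km)=k\phi(m)$, exactly as asserted.

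In the remaining case I would bound $|G(Bm,\overline{\chi^{A}})|$ by combining $(\ref{Gauss-sum})$ with the standard $|G(1,\chi^{\ast})|=\sqrt{N^{\ast}}$ for the conductor $N^{\ast}\mid km$ of $\chi^{A}$, and using $(\ref{Ramanujan-sum})$ directly when $\chi^{A}$ is still principal. Using $\phi(km)=k\phi(m)$ and $\gcd(Bm,km)=m\gcd(B,k)$, the factor $\phi(km)/\phi(km/\gcd(Bm,km))$ supplied by $(\ref{Gauss-sum})$ rearranges to give the announced error $O(\phi(m)\phi(k)^{-1}k^{3/2})$. I expect the main obstacle to be precisely this last bookkeeping step, which must treat uniformly the principal and non-principal sub-cases of $\chi^{A}$ and verify that the conductor of $\chi^{A}$, the divisibility of $B$ by $k$, and the various $\phi$-quotients combine into the claimed $k^{3/2}/\phi(k)$ dependence on $k$.
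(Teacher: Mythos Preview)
Your approach is essentially the paper's own. Both proofs rest on the same key observation, that $x\mapsto\chi(1+mx)$ is an additive character modulo $k$ (because $k\mid m$ forces $m^{2}\equiv 0\pmod{km}$), and both reduce the four-fold product to a single Gauss-type sum to which $(\ref{Ramanujan-sum})$ and $(\ref{Gauss-sum})$ apply. The only organisational difference is that the paper writes $t=y+zm$ with $0\le y<m$, $0\le z<k$, expands $(y+(z+j)m)^{a_{j+1}}$ to first order in $m$, and performs the $z$-sum first (which isolates the condition $k\mid A$ and leaves a Gauss sum modulo $m$, multiplied by $k$); you instead factor $t+jm=t(1+jmt^{-1})$ and land directly on a Gauss sum modulo $km$. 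These are the same computation, and your packaging is arguably tidier.

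One small point to watch in the final bookkeeping: the paper explicitly uses that $\chi(1+nm)=e_{k}(\beta n)$ with $\gcd(\beta,k)=1$ (primitivity of the induced additive character), whereas you allow an arbitrary $r\in\mathbb{Z}/k\mathbb{Z}$. Primitivity is what guarantees that $B\equiv 0\pmod{k}$ is equivalent to $k\mid(a_{2}+2a_{3}+3a_{4})$, and without it the ``otherwise'' branch could in principle still produce the main term $k\phi(m)$. For the character $\chi_{km}$ actually used in Theorem~\ref{Theorem-Sequence-Eulerquotients-generalized} this primitivity holds, so the issue is cosmetic, but you should state it when you carry out the error estimate.
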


\begin{proof} By the properties of residue systems we get
\begin{eqnarray*}
&&\sum_{t=0}^{km-1}\chi\left(t^{a_1}(t+m)^{a_2}(t+2m)^{a_3}(t+3m)^{a_4}\right) \nonumber\\
&&=\mathop{\sum_{y=0}^{m-1}}_{\gcd(y,m)=1}\sum_{z=0}^{k-1}
\chi\left((y+zm)^{a_1}(y+zm+m)^{a_2}(y+zm+2m)^{a_3}(y+zm+3m)^{a_4}\right) \nonumber\\
&&=\mathop{\sum_{y=0}^{m-1}}_{\gcd(y,m)=1}\sum_{z=0}^{k-1}
\chi\left((y^{a_1}+a_1y^{a_1-1}zm)(y^{a_2}+a_2y^{a_2-1}(z+1)m)\right) \nonumber\\
&&\qquad\times\chi\left((y^{a_3}+a_3y^{a_3-1}(z+2)m)
(y^{a_4}+a_4y^{a_4-1}(z+3)m)\right)\nonumber\\
&&=\mathop{\sum_{y=0}^{m-1}}_{\gcd(y,m)=1}\chi\left(y^{a_1+a_2+a_3+a_4}\right)\nonumber\\
&&\qquad\times\sum_{z=0}^{k-1}\chi\left((1+a_1y^{-1}zm)(1+a_2y^{-1}(z+1)m)
(1+a_3y^{-1}(z+2)m)(1+a_4y^{-1}(z+3)m)\right).
\end{eqnarray*}
Clearly $\chi\left(1+nm\right)$ is a primitive additive character modulo $k$,
so there is uniquely an integer $\beta$ such that $1\leq \beta\leq k$, $\gcd(\beta,k)=1$ and
$\chi\left(1+nm\right)=e_k(\beta n)$. Hence,
\begin{eqnarray*}
&&\sum_{t=0}^{km-1}\chi\left(t^{a_1}(t+m)^{a_2}(t+2m)^{a_3}(t+3m)^{a_4}\right) \nonumber\\
&&=\mathop{\sum_{y=0}^{m-1}}_{\gcd(y,m)=1}\chi\left(y^{a_1+a_2+a_3+a_4}\right) \nonumber\\
&&\qquad\times\sum_{z=0}^{k-1}e_k\left(\beta(a_1y^{-1}z+a_2y^{-1}(z+1)
+a_3y^{-1}(z+2)+a_4y^{-1}(z+3))\right) \nonumber\\
&&=\left\{\begin{array}{ll}
\displaystyle k\mathop{\sum_{y=0}^{m-1}}_{\gcd(y,m)=1}\chi^{-(a_1+a_2+a_3+a_4)}\left(y\right)
e_k\left(\beta(a_2+2a_3+3a_4)y\right), & \hbox{if \ } k\mid (a_1+a_2+a_3+a_4),\\
\displaystyle 0, & \hbox{if \ } k\nmid (a_1+a_2+a_3+a_4).
\end{array}
\right.
\end{eqnarray*}

We know that $\chi^{a_1+a_2+a_3+a_4}$ is a multiplicative character modulo $m$
if $k\mid a_1+a_2+a_3+a_4$. Then from (\ref{Ramanujan-sum}), (\ref{Gauss-sum})
and the properties of Ramanujan sums and Gauss sums we get
\begin{eqnarray*}
&&\sum_{t=0}^{km-1}\chi\left(t^{a_1}(t+m)^{a_2}(t+2m)^{a_3}(t+3m)^{a_4}\right) \\
&&=\left\{\begin{array}{ll}
\displaystyle k\phi(m), & \hbox{if \ } m\mid (a_1+a_2+a_3+a_4) \hbox{ \ and \ }
k\mid (a_2+2a_3+3a_4), \\
\displaystyle O\left(\phi(m)\phi(k)^{-1}k^{\frac{3}{2}}\right), & \hbox{otherwise}.
\end{array}
\right.
\end{eqnarray*}
\end{proof}

\begin{lemma}\label{Lemma-exponentialsums}
Let $m\geq 2$ be an odd number and let $k>3$ be an integer with $k\mid m$.  Define
\begin{eqnarray*}
\Xi_{m,k}&:=&\mathop{\mathop{
\sum_{1\leq|a_1|, |a_2|, |a_3|, |a_4|\leq\frac{m-1}{2}}
}_{a_1+a_2+a_3+a_4\equiv 0\ (\bmod m)}}_{a_2+2a_3+3a_4\equiv 0\ (\bmod k)}
~~\sum_{l_1=\frac{m+1}{2}}^{m-1}e_m\left(-a_1l_1\right)
\sum_{l_2=\frac{m+1}{2}}^{m-1}e_m\left(-a_2l_2\right)\nonumber\\
&&\times\sum_{l_3=\frac{m+1}{2}}^{m-1}e_m\left(-a_3l_3\right)
\sum_{l_4=\frac{m+1}{2}}^{m-1}e_m\left(-a_4l_4\right).
\end{eqnarray*}
Then we have
\begin{eqnarray*}
\Xi_{m,k}=\frac{1}{48}m^4+O\left(\frac{m^4(\log m)^3}{k}\right).
\end{eqnarray*}
\end{lemma}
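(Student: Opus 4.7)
The plan is to evaluate $\Xi_{m,k}$ in closed form by exploiting an explicit formula for the inner exponential sums, then isolating the dominant term via Fourier analysis. First I would sum the geometric series to get, for $1\leq|a|\leq(m-1)/2$,
$$\sum_{l=(m+1)/2}^{m-1}e_m(-al) = -\tfrac12 + iT(a),$$
where $T(a) = \tfrac12\cot(\pi a/(2m))$ when $a$ is odd and $T(a) = -\tfrac12\tan(\pi a/(2m))$ when $a$ is even. The key features are that the real part is $-1/2$ regardless of $a$, and $T(a) = O(m/|a|)$ for odd $a$ but only $O(|a|/m)$ for even $a$.

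Next I would expand $\prod_{j=1}^4\bigl(-\tfrac12+iT(a_j)\bigr)$ over subsets $\mathcal{T}\subseteq\{1,2,3,4\}$. Under the involution $(a_j)\mapsto(-a_j)$, which preserves the constraints defining $\Xi_{m,k}$ but flips the sign of each $T(a_j)$, only subsets with $|\mathcal{T}|$ even contribute. The $|\mathcal{T}|=0$ contribution is $O(m^3/k)$ (by counting valid tuples), and the $|\mathcal{T}|=2$ contributions are $O(m^2(\log m)^2)$ (by $|T(a)|\ll m/|a|$ and $\sum_{1\leq|a|\leq(m-1)/2}1/|a|\ll\log m$), both safely below the target error.

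The main term is therefore the $|\mathcal{T}|=4$ piece $\sum^{*}\prod_j T(a_j)$. Since $T(a)=O(|a|/m)$ for even $a$ and $\cot(\pi a/(2m)) = 2m/(\pi a) + O(|a|/m)$ for odd $a$, the dominant contribution comes from all-odd tuples and gives $(m/\pi)^4\sum^{*}\prod 1/a_j$ up to a smaller error. Moreover, since $m$ is odd and the sum of four odd integers is even, the mod-$m$ constraint forces $a_1+a_2+a_3+a_4 = 0$ exactly, so the remaining task is to evaluate this rational sum with the constraints $a_j$ odd, $\sum a_j=0$, $a_2+2a_3+3a_4\equiv 0\pmod k$, and $|a_j|\leq(m-1)/2$.

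To evaluate $\sum^{*}\prod 1/a_j$ I would combine the integral identity $\mathbf{1}_{\sum a_j=0}=\int_0^1 e^{2\pi ix\sum a_j}\,dx$, the classical square-wave series $\sum_{a\text{ odd}}e^{2\pi iax}/a=(i\pi/2)\mathrm{sgn}(\tfrac12-\{x\})$, and the orthogonality identity $\mathbf{1}_{k\mid 3a_1+2a_2+a_3}=k^{-1}\sum_{c\bmod k}e_k(c(3a_1+2a_2+a_3))$. After a change of variables this reduces the sum to $(\pi^4/(16k))\sum_{c=0}^{k-1}J(c/k)$ plus errors, where $J(\alpha)=\int_0^1\prod_{j=0}^3\mathrm{sgn}(\tfrac12-\{x+j\alpha\})\,dx$. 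Using the Fourier coefficients of the square wave one computes $\int_0^1 J(\alpha)\,d\alpha=1/3$, so the Riemann sum gives $k^{-1}\sum_c J(c/k)=1/3+O(1/k)$; combined with the $m^4/\pi^4$ prefactor this recovers the main term $m^4/48$. The hardest step will be the careful accounting of the truncation errors in the square-wave series (which only converges conditionally) together with the Riemann-sum approximation, which must be jointly controlled to the stated $O(m^4(\log m)^3/k)$.
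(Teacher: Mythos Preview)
Your approach is sound and genuinely different from the paper's. The paper never computes the inner geometric sums explicitly; instead it uses the crude bound $\bigl|\sum_l e_m(-al)\bigr|\ll m/|a|$ to discard all tuples with some $|a_j|\gg k$, so that on the remaining range both congruences become exact equalities $a_1+a_2+a_3+a_4=0$ and $a_2+2a_3+3a_4=0$. Solving this linear system leaves $a_3,a_4$ free, the range is then re-extended, and summing over $a_3,a_4$ via orthogonality reduces everything to three elementary lattice-point counts (imported from \cite{LiuL2022}), which combine to give $m^4/12 - m^4/8 + m^4/16 = m^4/48$. Your route via the decomposition $-\tfrac12+iT(a)$, the parity involution, and the square-wave integral identity is more analytic; it has the virtue of being self-contained once $\int_0^1 J(\alpha)\,d\alpha=1/3$ is established, and that identity is in fact equivalent to the paper's count $m^2/12$ in (\ref{Lemma-exponentialsums-4}) after expanding $\prod_j(1-2\cdot\mathbf{1}_{[1/2,1)})$ and using that the lower-order probabilities are exactly $2^{-|S|}$.

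One correction: your bound for the $|\mathcal{T}|=2$ terms is off. Fixing the two indices $i,j$ carrying $T$'s and summing $|T(a_i)T(a_j)|\ll m^2/(|a_i||a_j|)$, the remaining two variables are constrained by one mod-$m$ and one mod-$k$ condition, leaving $\asymp m/k$ choices, so the contribution is $O(m^3(\log m)^2/k)$, not $O(m^2(\log m)^2)$. This is still comfortably inside the target error, so the argument survives. Your acknowledged ``hardest step'' is genuinely manageable: writing $S_M=S+E_M$ with $\|E_M\|_1=O((\log m)/m)$ and $\|E_M\|_2^2=O(1/m)$ (Gibbs-type bounds) controls the truncation to $O(m^3\log m)$, and the Riemann-sum error for the piecewise-linear $J$ is $O(m^4/k)$.
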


\begin{proof}
For absolute constant $c>0$ we get
\begin{eqnarray*}
&&\sum_{ck\leq|a_1|\leq\frac{m-1}{2}}\mathop{
\sum_{1\leq|a_2|, |a_3|, |a_4|\leq\frac{m-1}{2}}
}_{a_1+a_2+a_3+a_4\equiv 0\ (\bmod m)}
\left|\sum_{l_1=\frac{m+1}{2}}^{m-1}e_m\left(-a_1l_1\right)\right|
\cdot\left|\sum_{l_2=\frac{m+1}{2}}^{m-1}e_m\left(-a_2l_2\right)\right|\nonumber\\
&&\qquad\times\left|\sum_{l_3=\frac{m+1}{2}}^{m-1}e_m\left(-a_3l_3\right)\right|\cdot
\left|\sum_{l_4=\frac{m+1}{2}}^{m-1}e_m\left(-a_4l_4\right)\right| \nonumber\\
&&\ll\sum_{1\leq|a_2|\leq\frac{m-1}{2}}\frac{m}{|a_2|}
\sum_{1\leq|a_3|\leq\frac{m-1}{2}}\frac{m}{|a_3|}\sum_{1\leq|a_4|\leq\frac{m-1}{2}}
\frac{m}{|a_4|}\mathop{\sum_{ck\leq|a_1|\leq\frac{m-1}{2}}}_{a_1+a_2+a_3+a_4\equiv 0\ (\bmod m)}
\frac{m}{k} \nonumber\\
&&\ll\frac{m^4(\log m)^3}{k}.
\end{eqnarray*}
Hence,
\begin{eqnarray*}
\Xi_{m,k}&:=&\sum_{1\leq|a_1|, |a_2|\leq\frac{5k}{32}}\mathop{\mathop{
\sum_{1\leq|a_3|, |a_4|\leq\frac{k}{32}}
}_{a_1+a_2+a_3+a_4\equiv 0\ (\bmod m)}}_{a_2+2a_3+3a_4\equiv 0\ (\bmod k)}
\sum_{l_1=\frac{m+1}{2}}^{m-1}e_m\left(-a_1l_1\right)
\sum_{l_2=\frac{m+1}{2}}^{m-1}e_m\left(-a_2l_2\right)\nonumber\\
&&\times
\sum_{l_3=\frac{m+1}{2}}^{m-1}e_m\left(-a_3l_3\right)
\sum_{l_4=\frac{m+1}{2}}^{m-1}e_m\left(-a_4l_4\right)+O\left(\frac{m^4(\log m)^3}{k}\right)\nonumber\\
&=&\sum_{1\leq|a_1|, |a_2|\leq\frac{5k}{32}}~~\mathop{\mathop{
\sum_{1\leq|a_3|, |a_4|\leq\frac{k}{32}}
}_{a_1+a_2+a_3+a_4=0}}_{a_2+2a_3+3a_4=0}~~
\sum_{l_1=\frac{m+1}{2}}^{m-1}e_m\left(-a_1l_1\right)
\sum_{l_2=\frac{m+1}{2}}^{m-1}e_m\left(-a_2l_2\right)\nonumber\\
&&\times
\sum_{l_3=\frac{m+1}{2}}^{m-1}e_m\left(-a_3l_3\right)
\sum_{l_4=\frac{m+1}{2}}^{m-1}e_m\left(-a_4l_4\right)+O\left(\frac{m^4(\log m)^3}{k}\right)\nonumber\\
&=&\sum_{1\leq|a_3|, |a_4|\leq\frac{k}{32}}~~
\sum_{l_1=\frac{m+1}{2}}^{m-1}e_m\left(-(a_3+2a_4)l_1\right)
\sum_{l_2=\frac{m+1}{2}}^{m-1}e_m\left((2a_3+3a_4)l_2\right)\nonumber\\
&&\times\sum_{l_3=\frac{m+1}{2}}^{m-1}e_m\left(-a_3l_3\right)
\sum_{l_4=\frac{m+1}{2}}^{m-1}e_m\left(-a_4l_4\right)+O\left(\frac{m^4(\log m)^3}{k}\right).
\end{eqnarray*}
It is not hard to show that
\begin{eqnarray*}
&&\sum_{\frac{k}{32}<|a_3|\leq\frac{m-1}{2}}~~
\sum_{1\leq|a_4|\leq\frac{k}{32}}~~
\sum_{l_1=\frac{m+1}{2}}^{m-1}e_m\left(-(a_3+2a_4)l_1\right)
\sum_{l_2=\frac{m+1}{2}}^{m-1}e_m\left((2a_3+3a_4)l_2\right)\nonumber\\
&&\qquad\times\sum_{l_3=\frac{m+1}{2}}^{m-1}e_m\left(-a_3l_3\right)
\sum_{l_4=\frac{m+1}{2}}^{m-1}e_m\left(-a_4l_4\right) \nonumber\\
&&\ll\sum_{\frac{k}{32}<|a_3|\leq\frac{m-1}{2}}~~
\sum_{1\leq|a_4|\leq\frac{k}{32}}
m\cdot\left|\sum_{l_2=\frac{m+1}{2}}^{m-1}e_m\left((2a_3+3a_4)l_2\right)\right|
\cdot\frac{m}{k}\cdot\frac{m}{|a_4|}\nonumber\\
&&\ll\frac{m^3}{k}\sum_{1\leq|a_4|\leq\frac{k}{32}}\frac{1}{|a_4|}
\sum_{\frac{k}{32}<|a_3|\leq\frac{m-1}{2}}
\left|\sum_{l_2=\frac{m+1}{2}}^{m-1}e_m\left((2a_3+3a_4)l_2\right)\right|\nonumber\\
&&\ll\frac{m^4(\log m)^2}{k}.
\end{eqnarray*}
Therefore
\begin{eqnarray}\label{Lemma-exponentialsums-1}
\Xi_{m,k}&=&\sum_{1\leq|a_3|, |a_4|\leq\frac{m-1}{2}}~~
\sum_{l_1=\frac{m+1}{2}}^{m-1}e_m\left(-(a_3+2a_4)l_1\right)
\sum_{l_2=\frac{m+1}{2}}^{m-1}e_m\left((2a_3+3a_4)l_2\right)\nonumber\\
&&\quad\times\sum_{l_3=\frac{m+1}{2}}^{m-1}e_m\left(-a_3l_3\right)
\sum_{l_4=\frac{m+1}{2}}^{m-1}e_m\left(-a_4l_4\right)+O\left(\frac{m^4(\log m)^3}{k}\right)\nonumber\\
&=&\sum_{\frac{m+1}{2}\leq l_1, l_2, l_3, l_4\leq m-1}~~
\sum_{1\leq|a_3|\leq\frac{m-1}{2}}e_m\left((-l_1+2l_2-l_3)a_3\right)  \nonumber\\
&&\quad\times\sum_{1\leq|a_4|\leq\frac{m-1}{2}}e_m\left((-2l_1+3l_2-l_4)a_4\right)
+O\left(\frac{m^4(\log m)^3}{k}\right) \nonumber\\
&=&\sum_{\frac{m+1}{2}\leq l_1, l_2, l_3, l_4\leq m-1}~~
\sum_{|a_3|\leq\frac{m-1}{2}}e_m\left((-l_1+2l_2-l_3)a_3\right)  \nonumber\\
&&\quad\times\sum_{|a_4|\leq\frac{m-1}{2}}e_m\left((-2l_1+3l_2-l_4)a_4\right) \nonumber\\
&&-\sum_{\frac{m+1}{2}\leq l_1, l_2, l_3, l_4\leq m-1}~~
\sum_{|a_3|\leq\frac{m-1}{2}}e_m\left((-l_1+2l_2-l_3)a_3\right)  \nonumber\\
&&-\sum_{\frac{m+1}{2}\leq l_1, l_2, l_3, l_4\leq m-1}~~
\sum_{|a_4|\leq\frac{m-1}{2}}e_m\left((-2l_1+3l_2-l_4)a_4\right) \nonumber\\
&&+\sum_{\frac{m+1}{2}\leq l_1, l_2, l_3, l_4\leq m-1}1
+O\left(\frac{m^4(\log m)^3}{k}\right) \nonumber\\
&=&m^2\mathop{\mathop{
\sum_{\frac{m+1}{2}\leq l_1, l_2, l_3, l_4\leq m-1}
}_{2l_2\equiv l_1+l_3 (\bmod m)}}_{3l_2\equiv 2l_1+l_4 (\bmod m)}1
-\frac{m(m-1)}{2}\mathop{
\sum_{\frac{m+1}{2}\leq l_1, l_2, l_3\leq m-1}
}_{2l_2\equiv l_1+l_3 (\bmod m)}1 \nonumber\\
&&-\frac{m(m-1)}{2}\mathop{
\sum_{\frac{m+1}{2}\leq l_1, l_2, l_4\leq m-1}
}_{3l_2\equiv 2l_1+l_4 (\bmod m)}1+\frac{(m-1)^4}{16}\nonumber\\
&&+O\left(\frac{m^4(\log m)^3}{k}\right).
\end{eqnarray}

By using the idea of Lemma 2.2 of \cite{LiuL2022}
we have
\begin{eqnarray}\label{Lemma-exponentialsums-2}
\mathop{\sum_{\frac{m+1}{2}\leq l_1, l_2, l_3\leq m-1}}
_{2l_2\equiv l_1+l_3 (\bmod m)}1=\frac{m^2}{8}+O(m),
\end{eqnarray}
\begin{eqnarray}\label{Lemma-exponentialsums-3}
\mathop{\sum_{\frac{m+1}{2}\leq l_1, l_2, l_4\leq m-1}}_{3l_2\equiv 2l_1+l_4 (\bmod m)}1
=\frac{m^2}{8}+O(m)
\end{eqnarray}
and
\begin{eqnarray}\label{Lemma-exponentialsums-4}
\mathop{\mathop{\sum_{\frac{m+1}{2}\leq l_1, l_2, l_3, l_4\leq m-1}}_{2l_2\equiv l_1+l_3 (\bmod m)}
}_{3l_2\equiv 2l_1+l_4 (\bmod m)}1
=\frac{m^2}{12}+O(m).
\end{eqnarray}
Combining (\ref{Lemma-exponentialsums-1})-(\ref{Lemma-exponentialsums-4}) we immediately get
\begin{eqnarray*}
\Xi_{m,k}&=&m^2\left(\frac{m^2}{12}+O(m)\right)
-2\cdot\frac{m(m-1)}{2}\left(\frac{m^2}{8}+O(m)\right)+\frac{(m-1)^4}{16} \\
&&+O\left(\frac{m^4(\log m)^3}{k}\right)\\
&=&\frac{1}{48}m^4+O\left(\frac{m^4(\log m)^3}{k}\right).
\end{eqnarray*}
This completes the proof of Lemma \ref{Lemma-exponentialsums}.
\end{proof}

\bigskip

\section{Correlation measures of order $4$}

\quad \ Now we prove Theorem \ref{Theorem-Sequence-Eulerquotients-generalized}.
By the orthogonality relations of additive character sums we get
$$
s_t=\frac{1}{m}\sum_{|a|\leq\frac{m-1}{2}}\sum_{l=\frac{m+1}{2}}^{m-1}e_m\left(a(Q_{m}(t)-l)\right).
$$
Hence,
$$
(-1)^{s_t}=1-2s_t=-\frac{2}{m}\sum_{1\leq|a|\leq\frac{m-1}{2}}
\sum_{l=\frac{m+1}{2}}^{m-1}e_m\left(-al\right)e_m\left(aQ_{m}(t)\right)+\frac{1}{m}.
$$

Define
$$
\chi_{km}(n)=\left\{
\begin{array}{ll}
e_m\left(Q_m(n)\right), & \hbox{if \ } \gcd(n,m)=1, \\
0, & \hbox{if \ } \gcd(n,m)>1.
\end{array}
\right.
$$
Clearly $\chi_{km}(n+km)=\chi_{km}(n)$, and by (\ref{Eulerquotients1}) we have
$\chi_{km}(n_1n_2)=\chi_{km}(n_1)\chi_{km}(n_2)$. Then $\chi_{km}(n)$ is a Dirichlet
character modulo $km$ with $\chi_{km}^m$ being trivial. Therefore
\begin{eqnarray}\label{Sequence-Eulerquotients-express}
(-1)^{s_t}=-\frac{2}{m}\sum_{1\leq|a|\leq\frac{m-1}{2}}
\sum_{l=\frac{m+1}{2}}^{m-1}e_m\left(-al\right)\chi_{km}\left(t^a\right)
+\frac{1}{m}.
\end{eqnarray}

By (\ref{Sequence-Eulerquotients-express}), Lemmas \ref{Lemma-charactersums} and
\ref{Lemma-exponentialsums} we get
\begin{eqnarray}\label{Correlation-Eulerquotients1-0}
&&\sum_{t=0}^{km-1}(-1)^{s_t+s_{t+m}+s_{t+2m}+s_{t+3m}}\nonumber\\
&&=\mathop{\sum_{t=0}^{km-1}}_{\gcd(t,m)=1}(-1)^{s_t+s_{t+m}+s_{t+2m}+s_{t+3m}}
+\mathop{\sum_{t=0}^{km-1}}_{\gcd(t,m)>1}1 \nonumber\\
&&=\frac{2^4}{m^4}\sum_{1\leq|a_1|\leq\frac{m-1}{2}}~~
\sum_{l_1=\frac{m+1}{2}}^{m-1}e_m\left(-a_1l_1\right)
\sum_{1\leq|a_2|\leq\frac{m-1}{2}}~~
\sum_{l_2=\frac{m+1}{2}}^{m-1}e_m\left(-a_2l_2\right) \nonumber\\
&&\qquad\times\sum_{1\leq|a_3|\leq\frac{m-1}{2}}~~
\sum_{l_3=\frac{m+1}{2}}^{m-1}e_m\left(-a_3l_3\right)\sum_{1\leq|a_4|\leq\frac{m-1}{2}}~~
\sum_{l_4=\frac{m+1}{2}}^{m-1}e_m\left(-a_4l_4\right) \nonumber\\
&&\qquad\times
\sum_{t=0}^{km-1}\chi_{km}\left(t^{a_1}(t+m)^{a_2}(t+2m)^{a_3}(t+3m)^{a_4}\right) \nonumber\\
&&\qquad+\mathop{\sum_{t=0}^{km-1}}_{\gcd(t,m)>1}1 +O\left(k(\log m)^3\right) \nonumber\\
&&=\frac{2^4k\phi(m)}{m^4}\mathop{\mathop{
\sum_{1\leq|a_1|, |a_2|, |a_3|, |a_4|\leq\frac{m-1}{2}}
}_{a_1+a_2+a_3+a_4\equiv 0\ (\bmod m)}}_{a_2+2a_3+3a_4\equiv 0\ (\bmod k)}~~
\sum_{l_1=\frac{m+1}{2}}^{m-1}e_m\left(-a_1l_1\right)
\sum_{l_2=\frac{m+1}{2}}^{m-1}e_m\left(-a_2l_2\right)\nonumber\\
&&\qquad\times
\sum_{l_3=\frac{m+1}{2}}^{m-1}e_m\left(-a_3l_3\right)
\sum_{l_4=\frac{m+1}{2}}^{m-1}e_m\left(-a_4l_4\right) \nonumber\\
&&\qquad+k(m-\phi(m))+O\left(\phi(m)\phi(k)^{-1}k^{\frac{3}{2}}(\log m)^4\right)+O\left(k(\log m)^3\right) \nonumber\\
&&=km-\frac{2}{3}k\phi(m)+O\left(\phi(m)\phi(k)^{-1}k^{\frac{3}{2}}(\log m)^4\right).
\end{eqnarray}
This proves Theorem \ref{Theorem-Sequence-Eulerquotients-generalized}.

\section{Final remarks}

In this work, we have claimed that two families of binary sequences (see
(\ref{Sequence-Eulerquotients-1}) and (\ref{Sequence-Eulerquotients-2})) studied in the past several years
have `large' values on the correlation measures of order $4$. They would be very
vulnerable if used in cryptography.

It seems interesting to consider the case when the full peaks on the periodic correlation
measure of these sequences appear, i.e., their periodic correlation
measure of order $k$ equals to the period, see \cite{ChenGGT2021}. Such problem may be related to their linear complexity.\\

\textbf{Acknowledgements.} H. Liu was partially supported by National Natural Science Foundation of China under
Grant No. 12071368, and the Science and Technology Program of
Shaanxi Province of China under Grant No. 2019JM-573 and 2020JM-026.

Z. Chen and C. Wu were partially supported by the National Natural Science
Foundation of China under grant No. 61772292,
by the Provincial Natural Science Foundation of Fujian under grant No. 2020J01905 and
by the Program for Innovative Research Team in Science and Technology in Fujian Province University under grant No. 2018-49.


\end{document}